\newtheorem{thm}{Theorem}[section]
\newtheorem{cor}[thm]{Corollary}
\newtheorem{lem}[thm]{Lemma}
\newtheorem{pro}[thm]{Proposition}
\newtheorem{defi}[thm]{Definition}
\numberwithin{equation}{section}
\journal{}
\begin{document}
\begin{spacing}{1.15}
\begin{frontmatter}
\title{\textbf{The algebraic multiplicity of the spectral radius of a hypertree}}

\author{Lixiang Chen}\ead{chenlixiang@hrbeu.edu.cn}
\author{Changjiang Bu}\ead{buchangjiang@hrbeu.edu.cn}

\address{College of Mathematical Sciences, Harbin Engineering University, Harbin, PR China}

\begin{abstract}
It is well-known that the spectral radius of a connected uniform hypergraph is an eigenvalue of the hypergraph.
However, its algebraic multiplicity remains unknown.
In this paper, we use the Poisson Formula and matching polynomials to determine the algebraic multiplicity of the spectral radius of a uniform hypertree.
\end{abstract}

\begin{keyword} hypertree, spectral radius, algebraic multiplicity,  characteristic polynomial \\
\emph{AMS classification(2020):}05C65, 05C50.
\end{keyword}

\end{frontmatter}

\section{Introduction}
From the Perron-Frobenius Theorem (for matrices), it is known that the spectral radius of a connected graph is an eigenvalue of the graph with the algebraic multiplicity 1.
Part of the Perron-Frobenius Theorem has been generalized to tensors, in particular, it is known that the spectral radius of a connected uniform hypergraph is an eigenvalue \cite{chang2008perron}. However, it is unknown what its algebraic multiplicity is.
In this paper, we aim to determine the algebraic multiplicity of the spectral radius of a uniform hypertree.

The characteristic polynomial of a hypergraph is defined to be the characteristic polynomial of its adjacency tensor.
The Poisson Formula, given in \cite[Chapter 3, Theorem 3.4]{Cox2005using}, is a useful method for computing the characteristic polynomials of hypergraphs, particularly hypertrees \cite{bao2021com,chen2021reduction,cooper2015Computing}.
Cooper and Dutle \cite{cooper2015Computing} gave the characteristic polynomial of the (so-called) ``all-one" tensors and the $3$-uniform hyperstar.
Bao et al. \cite{bao2021com} provided a combinatorial method for computing  the characteristic polynomials of hypergraphs with cut vertices, and gave the characteristic polynomial of a $k$-uniform hyperstar.
The authors gave a reduction formula for the characteristic polynomial of $k$-uniform hypergraphs with pendant edges in \cite{chen2021reduction}.
Besides, they used the reduction formula to derive the characteristic polynomial and all distinct eigenvalues of $k$-uniform loose hyperpaths \cite{chen2021reduction}.

The matching polynomial of a tree coincides with its characteristic polynomial, as established in Corollary 2.1 of \cite{Godsil1978onthematching}.
However, this correlation does not extend to $k$-uniform hypertrees with $k\geq3$.
Zhang et al. showed that the set of roots of the matching polynomial of a $k$-uniform hypertree is a sub-set of its spectrum \cite{zhang2017tree}.
Based on Zhang et al.'s results, Clark and Cooper determined all eigenvalues (without multiplicity) of a $k$-uniform hypertree $T$ by roots of the matching polynomials of all sub-hypertrees of $T$ \cite{clark2018hypertree}.
Su et al. use  the matching polynomials to investigate a perturbation on the spectral radius of hypertrees \cite{Su2018matching}.


The rest of this paper is organized as follows: In Section \ref{sec2}, we present some notation and lemmas about the Poisson Formula for resultants (Section \ref{sec2.1}), the characteristic polynomial of a hypergraph (Section \ref{sec2.2}), and the matching polynomial of a hypergraph (Section \ref{sec2.3}).
In Section \ref{sec3}, we apply the Poisson Formula to determine the algebraic multiplicity of the spectral radius of a $k$-uniform hypertree.

\section{Preliminaries}\label{sec2}
In this section, we present some basic notation and  auxiliary lemmas regarding the Poisson Formula for resultants, the characteristic polynomial and matching polynomial of a hypergraph.
\subsection{Resultants}\label{sec2.1}


For a positive integer $n$, let $\left[ n \right] = \left\{ {1, \ldots ,n} \right\}$.
Let $F_1,F_2,\ldots,F_n$ be homogeneous polynomials over an algebraically closed field $\mathbb{K}$ in
variables $x_1, . . . , x_n$, where the degree of $F_i$ is $d_i$ for $i \in [n]$.
Denote
\begin{eqnarray*}
  \overline{F_i}&=&\overline{F_i}(x_1,x_2,\ldots,x_{n-1})=F_i(x_1,x_2,\ldots,x_{n-1},0) \\
  f_i &=& f_i(x_1,x_2,\ldots,x_{n-1})=F_i(x_1,x_2,\ldots,x_{n-1},1).
\end{eqnarray*}
Observe that $\overline{F_i}$ are still homogeneous, but  $f_i$ are not homogeneous in general for $i \in [n]$.

Let $I =\langle f_1,\ldots,f_{n-1}\rangle \subset \mathbb{K}[x_1,\ldots,x_n]$ be the ideal generated by $f_i$ for $i=1,\ldots,n-1$.
It implies that the set of solutions of the system $f_1=f_2=\cdots=f_{n-1}=0$ are the variety $\mathcal{V}(I)$.
Given a polynomial  $f \in \mathbb{K}[x_1,\ldots,x_n]$, define a linear map $m_f$ from $\mathbb{K}[x_1,\ldots,x_n]/ I$ to itself using the multiplication.
More precisely, the polynomial $f$ gives the coset $[f] \in \mathbb{K}[x_1,\ldots,x_n]/ I $, and the linear map $m_f$ is defined by the rules: if $[g] \in \mathbb{K}[x_1,\ldots,x_n]/ I$, then
$$m_f([g])=[f]\cdot [g]=[fg]\in \mathbb{K}[x_1,\ldots,x_n]/ I. $$
The ensuing statement of the Poisson Formula for resultants follows from \cite[Chapter 3, Theorem 3.4]{Cox2005using}, which is different from the original one in \cite[Proposition 2.7]{Jouanolou1991Le}.

\begin{lem}\cite[Poisson Formula for resultants]{Cox2005using}
If $\mathrm{Res}(\overline{F_1},\ldots,\overline{F_{n-1}}) \neq 0$, then the quotient ring $A=\mathbb{K}[x_1,\ldots,x_n]/ \langle f_1,\ldots,f_{n-1}\rangle $ has dimension $d_1\cdots d_{n-1}$ as a vector space over $\mathbb{K}$, and
\begin{equation}\label{eq2.1}
\mathrm{Res}(F_1,\ldots,F_n)=\mathrm{Res}(\overline{F_1},\ldots,\overline{F_{n-1}})^{d_n}\det(m_{f_n}:A\rightarrow A),
\end{equation}
where $m_{f_n}:A\rightarrow A$ is the linear map given by multiplication by $f_n$.
\end{lem}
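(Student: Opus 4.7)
The plan is to prove the two conclusions separately using classical elimination theory: Bezout's theorem in projective space gives the vector-space dimension of $A$, while the product formula for the resultant combined with the eigenvalue interpretation of multiplication maps yields the expression for $\mathrm{Res}(F_1,\ldots,F_n)$. Both steps hinge on the following geometric observation: the hypothesis $\mathrm{Res}(\overline{F_1},\ldots,\overline{F_{n-1}}) \neq 0$ is equivalent to the system $\overline{F_1} = \cdots = \overline{F_{n-1}} = 0$ having no nontrivial solution in $\mathbb{K}^{n-1}$, which in turn says that $F_1 = \cdots = F_{n-1} = 0$ has no common zero on the projective hyperplane $\{x_n = 0\}$. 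Dehomogenizing by $x_n = 1$ therefore produces a bijection between the projective common zeros of $F_1,\ldots,F_{n-1}$ in $\mathbb{P}^{n-1}$ and the affine common zeros $\mathcal{V}(I)$ of $f_1,\ldots,f_{n-1}$.

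First I would establish $\dim_{\mathbb{K}} A = d_1 \cdots d_{n-1}$. Because the projective common zero locus misses the hyperplane at infinity, it is a zero-dimensional closed subscheme of $\mathbb{P}^{n-1}$; Bezout's theorem then shows that its total length is $d_1 \cdots d_{n-1}$. Transferring this count to the affine chart via the bijection above and invoking the local-global decomposition of the Artinian ring $A$ identifies $\dim_{\mathbb{K}} A$ with this total length, which proves the first assertion.

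Next I would compute $\det(m_{f_n})$. Since $I$ is zero-dimensional, the Chinese Remainder Theorem gives $A \cong \prod_{P \in \mathcal{V}(I)} A_P$ with each $A_P$ a local Artinian $\mathbb{K}$-algebra of dimension $m(P)$. On each factor $A_P$, multiplication by $f_n$ acts as $f_n(P)\cdot \mathrm{id}$ plus a nilpotent endomorphism, so $\det(m_{f_n}) = \prod_P f_n(P)^{m(P)}$. The classical product formula for the mixed resultant expresses $\mathrm{Res}(F_1,\ldots,F_n)$ as $\mathrm{Res}(\overline{F_1},\ldots,\overline{F_{n-1}})^{d_n}$ times this same product $\prod_P f_n(P)^{m(P)}$, and combining the two expressions yields (\ref{eq2.1}).

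The main obstacle is justifying the product formula for the mixed resultant. The standard derivation is indirect: one proves the formula first in a \emph{generic} situation (all zeros simple and lying in the affine chart) using the universal property of the resultant, and then passes to the general case by specialization, using the fact that both sides of (\ref{eq2.1}) are polynomials in the coefficients of $F_1,\ldots,F_n$. Checking that this specialization is well-defined and respects the multiplicity bookkeeping at points with $m(P)>1$ is the only genuinely delicate piece; every other ingredient reduces to routine commutative algebra, and for the hypertree applications in later sections we will only need the statement itself.
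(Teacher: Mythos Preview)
The paper does not supply its own proof of this lemma; it is quoted directly from Cox, Little, and O'Shea, \emph{Using Algebraic Geometry}, Chapter~3, Theorem~3.4, and then immediately reformulated via Proposition~\ref{pro2.2} into the product form~(\ref{eq2.2}). So there is no in-paper argument to compare your proposal against.

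Your outline is essentially the standard route and is correct in spirit. One point of logical order deserves comment, though: you derive the determinant formula~(\ref{eq2.1}) by first invoking the product formula
\[
\mathrm{Res}(F_1,\ldots,F_n)=\mathrm{Res}(\overline{F_1},\ldots,\overline{F_{n-1}})^{d_n}\prod_{P\in\mathcal V(I)} f_n(P)^{m(P)}
\]
and then matching it with the eigenvalue expansion $\det(m_{f_n})=\prod_P f_n(P)^{m(P)}$. In the paper (and in Cox) the logic runs the other way: (\ref{eq2.1}) is established directly, and the product formula~(\ref{eq2.2}) is deduced from it via Proposition~\ref{pro2.2}. Your direction is not wrong, but it shifts the entire burden onto what you call the ``classical product formula for the mixed resultant,'' which is precisely the statement being proved. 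As you correctly flag in your last paragraph, justifying that formula requires the generic-coefficients-plus-specialization argument; that paragraph is therefore not an afterthought but the whole proof, and your earlier Chinese Remainder step becomes a consequence rather than an ingredient. If you want a self-contained argument, you should either prove~(\ref{eq2.1}) directly (e.g., via an explicit monomial basis of $A$ and the universal property of the resultant, as in Cox) or carry out the specialization argument for the product formula in full before appealing to it.
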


There is a one-one correspondence between the eigenvalues of the linear map $m_{f} : \mathbb{K}[x_1,\ldots,x_n]/ I \rightarrow \mathbb{K}[x_1,\ldots,x_n]/ I$ and the points of the variety  $\mathcal{V}(I)$,
so the characteristic polynomial of the linear map $m_{f}$ can be expressed in terms of the points of $\mathcal{V}(I)$.

\begin{pro}\cite[Chapter 4, Proposition 2.7]{Cox2005using}\label{pro2.2}
Let $\mathbb{K}$ be an  algebraically closed field and let $I$ be a zero-dimensional ideal in $\mathbb{K}[x_1,\ldots,x_n]$. If $f \in \mathbb{K}[x_1,\ldots,x_n]$, then
$$\det(\lambda I-m_f)=\prod_{\mathbf{p} \in \mathcal{V}(I)}\left(\lambda-f(\mathbf{p})\right)^{m(\mathbf{p})},$$
where $m(\mathbf{p})$ is the multiplicity \footnote{The multiplicity is sometimes called the local intersection multiplicity, and its definition can be found in \cite[Chapter 4, \S 2]{Cox2005using}, but it is inconsequential in this paper.} of the point $\mathbf{p} \in \mathcal{V}(I)$.
\end{pro}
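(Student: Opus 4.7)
The plan is to exploit the structure theorem for the Artinian algebra $A = \mathbb{K}[x_1,\ldots,x_n]/I$. Since $I$ is zero-dimensional, $A$ is a finite-dimensional $\mathbb{K}$-vector space, and by the Nullstellensatz the maximal ideals of $A$ are in natural bijection with the points $\mathbf{p} \in \mathcal{V}(I)$, with each $\mathbf{p}$ corresponding to the maximal ideal $\mathfrak{m}_{\mathbf{p}} = \langle x_1 - p_1,\ldots,x_n - p_n\rangle/I$. The first step is to invoke the Chinese Remainder Theorem for Artinian rings to obtain the decomposition
\[
A \;\cong\; \bigoplus_{\mathbf{p} \in \mathcal{V}(I)} A_{\mathbf{p}},
\]
where $A_{\mathbf{p}}$ denotes the localization at $\mathfrak{m}_{\mathbf{p}}$; in this setting $\dim_{\mathbb{K}} A_{\mathbf{p}} = m(\mathbf{p})$ is precisely the local intersection multiplicity invoked in the statement.

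Next I would observe that multiplication by $f$ preserves this direct sum decomposition, because each summand $A_{\mathbf{p}}$ is an ideal of $A$. Consequently the characteristic polynomial of $m_f$ factors as
\[
\det(\lambda I - m_f) \;=\; \prod_{\mathbf{p} \in \mathcal{V}(I)} \det\bigl(\lambda I - m_f|_{A_{\mathbf{p}}}\bigr),
\]
and it suffices to compute the characteristic polynomial of $m_f$ restricted to each local piece $A_{\mathbf{p}}$.

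For a fixed $\mathbf{p}$, I would split $f = f(\mathbf{p})\cdot 1 + \bigl(f - f(\mathbf{p})\bigr)$ inside $A_{\mathbf{p}}$. The element $f - f(\mathbf{p})$ vanishes at $\mathbf{p}$, hence lies in $\mathfrak{m}_{\mathbf{p}}$; since $A_{\mathbf{p}}$ is a local Artinian ring its maximal ideal is nilpotent, so multiplication by $f - f(\mathbf{p})$ is a nilpotent endomorphism of $A_{\mathbf{p}}$. Therefore $m_f|_{A_{\mathbf{p}}}$ equals $f(\mathbf{p})$ times the identity plus a nilpotent operator, and its characteristic polynomial is $(\lambda - f(\mathbf{p}))^{\dim_{\mathbb{K}} A_{\mathbf{p}}} = (\lambda - f(\mathbf{p}))^{m(\mathbf{p})}$. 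Taking the product over all $\mathbf{p}\in\mathcal{V}(I)$ yields the desired formula.

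The main technical obstacle is the initial decomposition $A \cong \bigoplus A_{\mathbf{p}}$ together with the identification $\dim_{\mathbb{K}} A_{\mathbf{p}} = m(\mathbf{p})$; this relies on the standard fact that a zero-dimensional Noetherian ring is Artinian and splits uniquely as a finite product of local Artinian rings, which I would quote rather than re-derive. Once this decomposition is secured, what remains is the elementary linear-algebra observation that a scalar multiple of the identity plus a nilpotent operator has the advertised characteristic polynomial.
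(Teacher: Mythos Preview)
Your argument is correct and is essentially the standard proof from the cited reference: decompose the Artinian ring $A$ as a product of its localizations at the finitely many maximal ideals, identify $\dim_{\mathbb{K}} A_{\mathbf{p}}$ with the intersection multiplicity $m(\mathbf{p})$, and observe that on each local factor $m_f$ is $f(\mathbf{p})\cdot\mathrm{id}$ plus a nilpotent operator. One small clarification: when you say ``each summand $A_{\mathbf{p}}$ is an ideal of $A$'' to justify that $m_f$ respects the decomposition, it would be cleaner to say that the decomposition $A\cong\prod_{\mathbf{p}} A_{\mathbf{p}}$ is a ring isomorphism, so multiplication by any element of $A$ automatically acts componentwise.

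Note, however, that the paper itself does \emph{not} supply a proof of this proposition; it is quoted verbatim from \cite[Chapter 4, Proposition 2.7]{Cox2005using} and used as a black box. So there is no ``paper's own proof'' to compare against---your write-up simply fills in the argument that the authors chose to cite rather than reproduce.
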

By Proposition \ref{pro2.2}, it implies that \eqref{eq2.1} can be rewritten as
\begin{equation}\label{eq2.2}
\mathrm{Res}(F_1,\ldots,F_n)=\mathrm{Res}(\overline{F_1},\ldots,\overline{F_{n-1}})^{d_n}\prod_{\mathbf{p} \in \mathcal{V}}f_n(\mathbf{p})^{m(\mathbf{p})},
\end{equation}
where $\mathcal{V}$ is the affine variety defined by the polynomials $f_i$ for all $i \in [n-1]$.
The formula \eqref{eq2.2} is also named Poisson Formula for resultants in the monograph \cite[Chapter 13, Theorem 1.3]{Gel994Res}.

\subsection{The characteristic polynomial of a hypergraph}\label{sec2.2}


A hypergraph $H=(V,E)$ is called \emph{$k$-uniform} if each edge of $H$ contains exactly $k$ vertices.
Similar to the relation between graphs and matrices, there is a natural correspondence between uniform hypergraphs and tensors.
For a $k$-uniform hypergraph $H$ with $n$ vertices, its \emph{(normalized) adjacency tensor} ${A}_H=(a_{i_1i_2\ldots i_k})$ is a $k$-order $n$-dimensional tensor \cite{cooper2012spectra}, where
\[{a_{{i_1}{i_2} \ldots {i_k}}} = \left\{ \begin{array}{l}
 \frac{1}{{\left( {k - 1} \right)!}},{\kern 37pt}\mathrm{ if}{\kern 2pt}{ \left\{ {{i_1},{i_2},\ldots ,{i_k}} \right\} \in {E}}, \\
 0, {\kern 57pt}\mathrm{ otherwise}. \\
 \end{array} \right.\]
When $k=2$, ${A}_H$ is the usual adjacency matrix of the graph $H$.

Let $H=(V,E)$ be a $k$-uniform hypergraph with $V=[n]$.
Given a hyperedge $e\in E$, and a vector $\mathbf{x} =(x_1,x_2,\ldots,x_n)^{\top}\in {\mathbb{C}^n}$, let $\mathbf{x}_{e}=\prod\limits_{v \in e}x_v$.
Let $E_v=\{e\in E: v\in e\}$ denote the set of hyperedges containing the vertex $v$.
If there exists a nonzero vector $\mathbf{x}$ such that for each $i \in [n]$,
\[\lambda x_i^{k-1} = \sum\limits_{{i_2}, \ldots ,{i_k}=1}^n {{a_{i{i_2} \cdots {i_k}}}{x_{{i_2}}} \cdots {x_{{i_k}}}} ,\]
or equivalently, for each $v \in V$,
\[\lambda x_v^{k-1} = \sum\limits_{e \in E_v} \mathbf{x}_{e \setminus \{v\}}, \]
then $\lambda$ is called an \emph{eigenvalue} of $H$ and $\mathbf{x}$ is an \emph{eigenvector} of $H$ corresponding to $\lambda$ \cite{lim2005singular,qi2005eigenvalues}.

For each $v \in V$, define
\begin{equation*}
  F_v=F_v(x_1,x_2,\ldots,x_{n})=\lambda x^{k-1}_v-\sum_{e \in E_v}\mathbf{x}_{e \setminus \{v\}}.
\end{equation*}
The polynomial
\begin{equation*}
  \phi_H(\lambda)\equiv \mathrm{Res}(F_v: v \in V)
\end{equation*}
in the indeterminant $\lambda$ is called the \emph{characteristic polynomial} of $H$.
For a fixed vertex $u \in V$, let
\begin{equation*}
  f_v=F_v|_{x_u=1}=F_v(x_1,\cdots,x_{u-1},1,x_{u+1},\ldots,x_{n}).
\end{equation*}
Let $\mathcal{V}$ be the affine variety defined by the polynomials $f_v$ for all $ v \in V \setminus \{u\}$.
Let $H-u$ denote the hypergraph obtained from $H$ by removing the vertex $u$ and all hyperedges incident to $u$.
Applying the Poisson Formula to the characteristic polynomial $\phi_H(\lambda)$, a reduction formula for $\phi_H(\lambda)$ is derived as follows.

\begin{lem}\cite[Formula (1)]{chen2021reduction}\label{lem2.3}
Let $H=(V,E)$ be a $k$-uniform hypergraph with the vertex $u$.
Then the  characteristic polynomial
\begin{align*}
 \phi_H(\lambda)=\phi_{H-u}(\lambda)^{k-1}\prod_{\mathbf{p} \in \mathcal{V}}(\lambda-\sum_{e \in E_u}\mathbf{p}_{e\setminus \{u\} })^{m(\mathbf{p})},
 \end{align*}
where $m(\mathbf{p})$ is the multiplicity of $\mathbf{p}$ in $\mathcal{V}$.
\end{lem}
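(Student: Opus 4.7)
The plan is to apply the Poisson Formula \eqref{eq2.2} directly to $\phi_H(\lambda)=\mathrm{Res}(F_v:v\in V)$, with the fixed vertex $u$ playing the role of the ``last'' variable $x_n$. The key observation is that setting $x_u=0$ in the defining polynomial $F_v$ for $v\neq u$ kills precisely those terms coming from hyperedges through $u$, so the truncated system is the defining system of $H-u$, and setting $x_u=1$ in $F_u$ yields the expression that appears in the claimed product.

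First I would verify that every $F_v=\lambda x_v^{k-1}-\sum_{e\in E_v}\mathbf{x}_{e\setminus\{v\}}$ is homogeneous of degree $d_v=k-1$, so the Poisson Formula applies with $d_u=k-1$. Next I would compute $\overline{F_v}=F_v|_{x_u=0}$ for $v\neq u$: in $F_v$, the monomial $\mathbf{x}_{e\setminus\{v\}}$ contains $x_u$ exactly when $u\in e$, i.e.\ when $e\in E_u\cap E_v$; setting $x_u=0$ erases those terms and leaves
\begin{equation*}
\overline{F_v}=\lambda x_v^{k-1}-\sum_{e\in E_v\setminus E_u}\mathbf{x}_{e\setminus\{v\}},
\end{equation*}
which is exactly the polynomial associated to $v$ in $H-u$. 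Therefore $\mathrm{Res}(\overline{F_v}:v\in V\setminus\{u\})=\phi_{H-u}(\lambda)$. Symmetrically, since $u\notin e\setminus\{u\}$ for any $e\in E_u$, the variable $x_u$ appears in $F_u$ only through the leading term, so $f_u=F_u|_{x_u=1}=\lambda-\sum_{e\in E_u}\mathbf{x}_{e\setminus\{u\}}$.

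Plugging these identifications into \eqref{eq2.2} with $d_u=k-1$ gives
\begin{equation*}
\phi_H(\lambda)=\phi_{H-u}(\lambda)^{k-1}\prod_{\mathbf{p}\in\mathcal{V}}f_u(\mathbf{p})^{m(\mathbf{p})}=\phi_{H-u}(\lambda)^{k-1}\prod_{\mathbf{p}\in\mathcal{V}}\Bigl(\lambda-\sum_{e\in E_u}\mathbf{p}_{e\setminus\{u\}}\Bigr)^{m(\mathbf{p})},
\end{equation*}
as desired.

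The one subtlety I expect is the hypothesis $\mathrm{Res}(\overline{F_v}:v\neq u)\neq 0$ required by \eqref{eq2.2}. Here $\lambda$ is an indeterminate and everything is a polynomial in $\lambda$: the resultant $\phi_{H-u}(\lambda)$ is a nonzero polynomial in $\lambda$ (its leading coefficient is a nonzero power of $\lambda$), so after passing to the algebraic closure of $\mathbb{K}(\lambda)$ the hypothesis is satisfied generically, and both sides of the claimed identity are polynomials in $\lambda$; the identity then holds as a polynomial identity. Beyond this routine check, the rest is a direct substitution into \eqref{eq2.2}.
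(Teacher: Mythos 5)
Your proposal is correct and follows exactly the route the paper intends for this lemma: apply the Poisson Formula \eqref{eq2.2} with $x_u$ as the distinguished variable, identify $\mathrm{Res}(\overline{F_v}:v\neq u)$ with $\phi_{H-u}(\lambda)$ and $f_u$ with $\lambda-\sum_{e\in E_u}\mathbf{x}_{e\setminus\{u\}}$, working over (the algebraic closure of) $\mathbb{K}(\lambda)$ where $\phi_{H-u}(\lambda)\neq 0$. This is the same derivation as in the cited source \cite{chen2021reduction}, so nothing further is needed.
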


\begin{defi}[\emph{Cut vertices} of hypergraphs]\cite{bao2021com}\label{dificutvertex}
Let $k \geq 3$, and let $H = (V, E)$ be a $k$-uniform connected hypergraph and $u \in V$.
Denote $E_{\widetilde{u}}=\{e \setminus \{u\}: e \in E_u \}$, and note that the hyperedge $\widetilde{e} =e \setminus \{u\} \in E_{\widetilde{u}}$ has $k-1$ vertices.
Deleting the vertex $u$ and changing $e$ to $\widetilde{e}$ for every $e \in E_u$, it can get a non-uniform hypergraph $\widetilde{H}=(\widetilde{V},\widetilde{E})$ with $\widetilde{V}=V\setminus {u}$ and $\widetilde{E}=(E \setminus E_u)\cup E_{\widetilde{u}} $.
The vertex $u$ is called a cut vertex  if $\widetilde{H}$ is not connected.
\end{defi}

The vertex $u$ may not necessarily be a cut vertex of a connected hypergraph $H$, even if $H-u$ is not connected. For instance, when $k \geq 3$, the vertex $u$ with degree one of a $k$-uniform hypertree $T$ is not a cut vertex, even if $T-u$ is not connected.
Suppose that $\widetilde{H}_1=(\widetilde{V}_1,\widetilde{E}_1),\ldots,\widetilde{H}_n=(\widetilde{V}_n,\widetilde{E}_n)$ are  connected components of
$\widetilde{H}$.
For each $i \in [n]$, denote the induced sub-hypergraph of $H$ on $\widetilde{V}_i \cup \{u\}$ by $\widehat{H}_i$,
then we call $\widehat{H}_i$ a \emph{branch} of $H$ associated with $u$.
It implies that $H$ can be obtained by coalescing $\widehat{H}_1, \ldots,\widehat{H}_n$ to the vertex $u$.
Recall that the variety $\mathcal{V}(H)$ is defined by the polynomials
$$f_v=\lambda x^{k-1}_v-\sum_{e \in E_v(H)}\mathbf{x}_{e \setminus \{v\}}|_{x_u=1}$$ for all $ v \in V \setminus \{u\}$.
In \cite{bao2021com}, it is shown that if $u$ is a cut vertex of $H$, then $$\mathcal{V}(H)=\bigoplus_{i=1}^n\mathcal{V}(\widehat{H}_i).$$
Bao et al. subsequently gave a reduction formula for the characteristic polynomials of hypergraphs with cut vertices in terms of the linear map \cite[Corollary 3.2]{bao2021com}.
For the convenience of use in Section \ref{sec3}, we restate their formula following a similar approach to the reformulation of \eqref{eq2.1} as \eqref{eq2.2}.

\begin{lem}\cite[Corollary 3.2]{bao2021com}\label{lembao}
Let $H$ be a $k$-uniform hypergraph with a cut vertex $u$ and branches $\widehat{H}_1, \cdots,\widehat{H}_n$.
Denote $\mathcal{V}^{(i)}=\mathcal{V}(\widehat{H}_i)$ and $E^{(i)}_u=E_u(\widehat{H}_i) \bigcap E_u(H)$.
Then
$$\phi_H(\lambda)=\phi_{H-u}(\lambda)^{k-1}\prod_{\mathbf{p}^{(i)} \in \mathcal{V}^{(i)} \atop i\in [n]}(\lambda-\sum_{e \in E^{(i)}_u \atop i\in [n] }\mathbf{p}^{(i)}_{e \setminus \{u\}})^{\prod_{i=1}^{n}m(\mathbf{p}^{(i)})},$$
where $m(\mathbf{p}^{(i)})$ is the multiplicity of $\mathbf{p}^{(i)}$ in $\mathcal{V}^{(i)}$ for each $i \in [n]$.
\end{lem}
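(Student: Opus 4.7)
The plan is to derive this reformulation from Bao et al.'s original \cite[Corollary 3.2]{bao2021com} by applying Proposition \ref{pro2.2}, mirroring exactly the passage from \eqref{eq2.1} to \eqref{eq2.2}. The original statement expresses
\begin{equation*}
\phi_H(\lambda) = \phi_{H-u}(\lambda)^{k-1}\det(m_{g})
\end{equation*}
for the multiplication-by-$g$ map on the quotient ring associated to $\mathcal{V}(H)$, where $g = \lambda - \sum_{e \in E_u(H)}\mathbf{x}_{e \setminus \{u\}}$. Proposition \ref{pro2.2} then rewrites $\det(m_g)$ as $\prod_{\mathbf{p} \in \mathcal{V}(H)} g(\mathbf{p})^{m(\mathbf{p})}$, so the task reduces to repackaging this product using the cut-vertex structure.

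Next I would invoke the direct-sum decomposition $\mathcal{V}(H) = \bigoplus_{i=1}^n \mathcal{V}^{(i)}$ recorded just above the lemma. Under the cut-vertex hypothesis the defining polynomials $f_v$ for $v \in V\setminus\{u\}$ split into blocks indexed by branches, since the only variable that could be shared across branches is $x_u$ and that variable has been specialized to $1$; therefore each $\mathbf{p} \in \mathcal{V}(H)$ is uniquely encoded as a tuple $(\mathbf{p}^{(1)},\ldots,\mathbf{p}^{(n)})$ with $\mathbf{p}^{(i)} \in \mathcal{V}^{(i)}$. The same hypothesis yields the disjoint partition $E_u(H) = \bigsqcup_{i=1}^n E^{(i)}_u$, and for $e \in E^{(i)}_u$ the factor $\mathbf{p}_{e\setminus\{u\}}$ only involves coordinates belonging to $\mathbf{p}^{(i)}$, so
\begin{equation*}
g(\mathbf{p}) = \lambda - \sum_{i=1}^{n}\sum_{e \in E^{(i)}_u}\mathbf{p}^{(i)}_{e\setminus\{u\}}.
\end{equation*}

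Finally I would invoke the multiplicativity of the local intersection multiplicity over direct sums of zero-dimensional ideals in disjoint variable sets, giving $m(\mathbf{p}) = \prod_{i=1}^n m(\mathbf{p}^{(i)})$, and substitute both identities into $\prod_{\mathbf{p} \in \mathcal{V}(H)} g(\mathbf{p})^{m(\mathbf{p})}$. The direct-sum identification turns this product into one indexed by tuples in $\prod_i \mathcal{V}^{(i)}$, matching the formula in the statement. The main obstacle is the clean justification of the multiplicity factorization; the remaining steps are essentially reindexing. I expect to handle it either by citing a standard tensor-product property of intersection multiplicities for ideals with disjoint variable sets, or by a direct local-ring argument using the definition from \cite[Chapter 4, \S 2]{Cox2005using}, exploiting the fact that the local ring of $\mathbf{p}$ in $\mathcal{V}(H)$ factors as a tensor product of the local rings of the $\mathbf{p}^{(i)}$ in $\mathcal{V}^{(i)}$ precisely because the defining ideals live in disjoint sets of variables.
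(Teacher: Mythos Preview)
Your proposal is correct and matches precisely what the paper does: the paper does not give a standalone proof of this lemma but explicitly states that it is a restatement of \cite[Corollary 3.2]{bao2021com} ``following a similar approach to the reformulation of \eqref{eq2.1} as \eqref{eq2.2},'' i.e., by applying Proposition~\ref{pro2.2} to convert the determinant of the multiplication map into a product over the variety, and then invoking the direct-sum decomposition $\mathcal{V}(H)=\bigoplus_{i=1}^n\mathcal{V}(\widehat{H}_i)$ already recorded in the paragraph preceding the lemma. Your identification of the multiplicity factorization $m(\mathbf{p})=\prod_i m(\mathbf{p}^{(i)})$ as the only nontrivial step is accurate, and the tensor-product-of-local-rings justification you sketch is the standard one; the paper leaves this implicit.
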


When one of the branches is the one-edge hypergraph, it implies that $H$ has a pendant edge incident to $u$.
A more explicit reduction formula for hypergraphs with pendant edges is shown as follows.

\begin{lem}\cite[Theorem 3.2]{chen2021reduction}\label{lem2.6}
Let $H$ be a $k$-uniform hypergraph with a pendant edge incident to the non-pendent vertex $u$, and we define $\widehat{H}$ as the $k$-uniform hypergraph obtained by removing the pendant edge and pendent vertices on it from $H$. Then
\begin{align*}
&\phi_H(\lambda)=\\
&\phi_{H-u}(\lambda)^{k-1}\prod_{\mathbf{p} \in \mathcal{V}(\widehat{H})}(\lambda-\sum_{e\in E_u(\widehat{H})}\mathbf{p}_{e\setminus \{u\}})^{m(\mathbf{p})K_1}\prod_{\mathbf{p} \in \mathcal{V}(\widehat{H})}(\lambda-\frac{1}{\lambda^{k-1}}-\sum_{e\in E_u(\widehat{H})}\mathbf{p}_{e\setminus \{u\}})^{m(\mathbf{p})K_2},
\end{align*}
where $K_1=(k-1)^{k-1}-k^{k-2}$ and $K_2=k^{k-2}$.
\end{lem}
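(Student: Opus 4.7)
The plan is to apply the cut-vertex reduction of Lemma~\ref{lembao}, taking $u$ as the cut vertex and letting the two branches be $\widehat{H}$ and the single-edge hypergraph $P=(\{u,v_1,\ldots,v_{k-1}\},\{e_0\})$ obtained from the pendant edge $e_0$. Because $u$ is non-pendant its degree in $H$ is at least two, while the pendant vertices $v_1,\ldots,v_{k-1}$ occur only in $e_0$; the construction in Definition~\ref{dificutvertex} therefore isolates $\{v_1,\ldots,v_{k-1}\}$ from the rest of $\widetilde{H}$, so $u$ genuinely is a cut vertex and Lemma~\ref{lembao} applies with $\mathcal{V}(H)=\mathcal{V}(\widehat{H})\times\mathcal{V}(P)$. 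Everything then hinges on a precise description of $\mathcal{V}(P)$ together with its local multiplicities.

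Setting $x_u=1$, the ideal cutting out $\mathcal{V}(P)$ is generated by
\[
f_{v_j}=\lambda y_j^{k-1}-\prod_{i\neq j}y_i,\qquad j=1,\ldots,k-1,
\]
with $y_j=x_{v_j}$. Multiplying the $j$-th equation by $y_j$ gives $\lambda y_j^k=\prod_i y_i$, so all $y_j^k$ coincide, and a brief case analysis shows that either all $y_j=0$, or all $y_j$ are nonzero with $y_j=\zeta_j/\lambda$ for some $k$-th roots of unity $\zeta_j$; substituting back forces $\prod_i\zeta_i=1$, which yields exactly $K_2=k^{k-2}$ distinct nonzero points, each satisfying $\prod_i y_i=1/\lambda^{k-1}$. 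To pin down the local multiplicities I would compute the Jacobian of $(f_{v_1},\ldots,f_{v_{k-1}})$ at such a nonzero point: up to the scalar $\lambda^{-(k-3)}$ it takes the rank-one-update shape $kD-vv^{\top}$ with $v_j=\zeta_j^{-1}$ and $D=\mathrm{diag}(v_j^2)$, and the matrix-determinant lemma (together with $\prod_i\zeta_i=1$) gives determinant $k^{k-2}\neq 0$, so every nonzero point of $\mathcal{V}(P)$ is simple. The Poisson dimension count applied to $P$ --- the hypothesis $\mathrm{Res}(\overline{F_{v_j}})\neq 0$ being immediate, since the $x_u=0$ specialization of $F_{v_j}$ is the monomial $\lambda y_j^{k-1}$ --- gives a total multiplicity of $(k-1)^{k-1}$, so the origin must absorb the residual $K_1=(k-1)^{k-1}-k^{k-2}$.

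With $\mathcal{V}(P)$ in hand, substituting into Lemma~\ref{lembao} becomes bookkeeping: each pair $(\mathbf{p},\mathbf{q})\in\mathcal{V}(\widehat{H})\times\mathcal{V}(P)$ contributes the factor $\lambda-\sum_{e\in E_u(\widehat{H})}\mathbf{p}_{e\setminus\{u\}}-\prod_i q_i$ with exponent $m(\mathbf{p})m(\mathbf{q})$; separating the $\mathbf{q}$-product into the origin (value $0$, multiplicity $K_1$) and the $K_2$ simple nonzero points (common value $1/\lambda^{k-1}$) collapses the double product into the two announced factors with exponents $m(\mathbf{p})K_1$ and $m(\mathbf{p})K_2$. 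The main obstacle is the local analysis of $\mathcal{V}(P)$ in the middle paragraph --- enumerating the nonzero points, verifying their simplicity via the Jacobian rank-one-update identity, and using the Poisson dimension bound to fix the multiplicity at the origin; once that is in place the remainder of the proof is pure algebraic rearrangement.
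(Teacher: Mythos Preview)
Your proposal is correct. The paper does not prove Lemma~\ref{lem2.6} at all---it is quoted verbatim from \cite[Theorem 3.2]{chen2021reduction}---so there is no ``paper's own proof'' to compare against; what you have written is a self-contained derivation of the cited result from the more primitive Lemma~\ref{lembao} together with an explicit analysis of the single-edge variety $\mathcal{V}(P)$, and every step checks out (the parametrisation $y_j=\zeta_j/\lambda$ with $\prod_j\zeta_j=1$, the Jacobian identity $kD-vv^{\top}$ with determinant $k^{k-2}$ via the matrix-determinant lemma, and the Poisson dimension $(k-1)^{k-1}$ forcing the origin to carry multiplicity $K_1$).

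One small wording issue: you speak of ``the two branches $\widehat{H}$ and $P$'', but $u$ may already be a cut vertex of $\widehat{H}$, in which case Lemma~\ref{lembao} produces more than two branches $\widehat{H}_1,\ldots,\widehat{H}_r,P$. This is harmless for your argument---the variables in $\widehat{H}$ and in $P$ still separate, so $\mathcal{V}(H)=\mathcal{V}(\widehat{H})\times\mathcal{V}(P)$ with multiplicative multiplicities, and the tuple $(\mathbf{p}^{(1)},\ldots,\mathbf{p}^{(r)})$ regroups into a single $\mathbf{p}\in\mathcal{V}(\widehat{H})$ with $m(\mathbf{p})=\prod_i m(\mathbf{p}^{(i)})$ and $\sum_i\sum_{e\in E_u^{(i)}}\mathbf{p}^{(i)}_{e\setminus\{u\}}=\sum_{e\in E_u(\widehat{H})}\mathbf{p}_{e\setminus\{u\}}$---but you should say so explicitly rather than assert there are exactly two branches.
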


\subsection{The matching polynomial of a hypergraph}\label{sec2.3}

The \emph{matching} of a $k$-uniform hypergraph $H=(V,E)$ is a set of the pairwise non-adjacent edges in $E$.
The $t$-matching is a matching consisting of $t$ edges and the number of $t$-matching of $H$ is denoted by $m_t(H)$.
Set $m_0(H)=1$.
The \emph{matching polynomial} of $H$  is defined in \cite{Su2018matching} as
$$\varphi_H(\lambda)=\sum_{t\geq 0}(-1)^t m_t(H)x^{|V|-tk}.$$

Some classical results on the matching polynomials of  a graph are extended to the hypergraph case as follows.

\begin{lem}\cite{Su2018matching}\label{lemmatching}
Let $H=(V,E)$ and $G$ be two $k$-uniform hypergraphs. We use $H+G$ to denote the disjoint union of $H$ and $G$, and use $H-e$ to
denote the induced sub-hypergraph of $H$ on the $V\setminus e$ for $e\in E$. Then
\begin{enumerate}[(1)]
\item \label{lem2.7(1)}  $\varphi_{H+G}(\lambda)=\varphi_{H}(\lambda) \varphi_{G}(\lambda)$.
\item \label{lem2.7(2)}  $\varphi_{H}(\lambda) = \lambda\varphi_{H-u}(\lambda)-\sum_{e \in E_u}\varphi_{H-e}(\lambda)$.
\end{enumerate}
\end{lem}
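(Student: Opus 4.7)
The plan is to prove both identities directly from the definition
\[
\varphi_H(\lambda) = \sum_{t \geq 0}(-1)^t m_t(H)\lambda^{|V|-tk}
\]
by first establishing the corresponding combinatorial identities for the matching counts $m_t(\cdot)$ and then reassembling the generating functions.

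For part (1), I would observe that since $H$ and $G$ share no vertices, a $t$-matching of the disjoint union $H+G$ decomposes uniquely as an $s$-matching of $H$ together with a $(t-s)$-matching of $G$, yielding the convolution
\[
m_t(H+G) \;=\; \sum_{s=0}^{t} m_s(H)\, m_{t-s}(G).
\]
Combined with $|V(H+G)| = |V(H)| + |V(G)|$, substituting this into the defining series and regrouping by $(s,t-s)$ recognises the result as a Cauchy product, immediately giving $\varphi_H(\lambda)\varphi_G(\lambda)$.

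For part (2), the key step is to partition the $t$-matchings of $H$ according to whether any edge of the matching is incident to $u$. A $t$-matching containing no edge of $E_u$ is exactly a $t$-matching of $H-u$, contributing $m_t(H-u)$; a $t$-matching that does contain an (automatically unique) edge $e\in E_u$ is, after removing $e$, in bijection with a $(t-1)$-matching of the induced sub-hypergraph $H-e$. This produces
\[
m_t(H) \;=\; m_t(H-u) \,+\, \sum_{e \in E_u} m_{t-1}(H-e).
\]
Multiplying by $(-1)^t\lambda^{|V|-tk}$ and summing over $t\geq 0$, the first contribution factors out one $\lambda$ and gives $\lambda\,\varphi_{H-u}(\lambda)$ via $|V(H-u)| = |V|-1$, while the second, after the index shift $t = s+1$, yields $-\sum_{e\in E_u}\varphi_{H-e}(\lambda)$ via $|V(H-e)| = |V|-k$.

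Neither identity presents a serious obstacle; both amount to routine bookkeeping with the matching generating function. The only delicate point is exponent matching: because deleting a hyperedge removes $k$ vertices at once rather than a single vertex, one must verify the identity $\lambda^{|V|-(s+1)k} = \lambda^{(|V|-k)-sk}$ so that the power of $\lambda$ aligns with $|V(H-e)|-sk$ in the definition of $\varphi_{H-e}$. Once this alignment is in place, both statements follow by direct substitution.
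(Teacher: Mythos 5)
Your proof is correct. Note that the paper does not prove this lemma at all — it is quoted from the reference [Su et al.] — and your argument (the convolution identity $m_t(H+G)=\sum_{s} m_s(H)m_{t-s}(G)$ for the disjoint union, and the partition of $t$-matchings according to the unique edge of $E_u$ they contain, followed by reassembling the generating functions with the exponent shifts $|V|-1$ and $|V|-k$) is precisely the standard Godsil-type proof that the cited source uses, so there is nothing further to reconcile.
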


Zhang et al. \cite{zhang2017tree} showed that the set of roots of the matching polynomial of a $k$-uniform hypertree is a sub-set of its spectrum.
And this result was extended by Clark and Cooper \cite{clark2018hypertree} as follows.

\begin{lem}\cite[Theoerm 2]{clark2018hypertree}\label{lem2.8}\footnote{The definition of the matching polynomial of a hypergraph varies between \cite{clark2018hypertree,zhang2017tree} and \cite{Su2018matching}. However, Lemma \ref{lem2.8} is applicable to both definitions as shown in \cite{Su2018matching}.}
Let $T$ be a $k$-uniform hypertree for $k \geq 3$.
Then $\lambda$ is an eigenvalue of $T$ if and only if there exists an induced sub-tree $\widehat{T}$ of $T$ such that $\lambda$ is a root of the matching polynomial $\varphi_{\widehat{T}}(\lambda)$.
\end{lem}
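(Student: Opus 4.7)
The plan is to prove the equivalence by induction on the number of hyperedges in $T$, combining the pendant-edge reduction formula (Lemma \ref{lem2.6}) for the characteristic polynomial with the edge-vertex recursion (Lemma \ref{lemmatching}) for the matching polynomial. The base case when $T$ is a single hyperedge follows by a direct computation of both $\phi_T$ and $\varphi_T$. For the inductive step, I would fix a pendant hyperedge $e^* = \{u,v_1,\ldots,v_{k-1}\}$ of $T$ with pendant vertices $v_1,\ldots,v_{k-1}$, and set $\widehat{T}$ to be $T$ with $e^*$ and its pendant vertices removed; since $\widehat{T}$ has one fewer hyperedge, the inductive hypothesis applies to it.

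For direction $(\Leftarrow)$, given that $\lambda$ is a root of $\varphi_{\widehat{T}'}$ for some induced sub-tree $\widehat{T}'$ of $T$, Zhang et al.'s theorem produces an eigenvector $\mathbf{x}$ of $\widehat{T}'$ with eigenvalue $\lambda$. I would extend $\mathbf{x}$ to $\mathbf{y}$ on $V(T)$ by setting $y_v = x_v$ for $v\in V(\widehat{T}')$ and $y_v = 0$ otherwise. At vertices $v \in V(\widehat{T}')$ the equation holds since any hyperedge of $T$ incident to $v$ but not in $\widehat{T}'$ must contain a vertex outside $V(\widehat{T}')$ (as $\widehat{T}'$ is induced) and thus contributes zero. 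At vertices $v \notin V(\widehat{T}')$ the key observation is that if some $e\in E(T)$ incident to $v$ had two or more vertices in $V(\widehat{T}')$, those vertices together with a path in the connected sub-hypertree $\widehat{T}'$ linking them and the edge $e$ itself would create a cycle in $T$, contradicting acyclicity; hence each such $e$ meets $V(\widehat{T}')$ in at most one vertex, so $e\setminus\{v\}$ contains a vertex outside $V(\widehat{T}')$ and $\mathbf{y}_{e\setminus\{v\}} = 0$.

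For direction $(\Rightarrow)$, Lemma \ref{lem2.6} applied at the pendant edge $e^*$ yields
\begin{equation*}
\phi_T(\lambda) = \phi_{T-u}(\lambda)^{k-1}\!\!\prod_{\mathbf{p}\in\mathcal{V}(\widehat{T})}\!\bigl(\lambda - S(\mathbf{p})\bigr)^{m(\mathbf{p})K_1}\!\!\prod_{\mathbf{p}\in\mathcal{V}(\widehat{T})}\!\Bigl(\lambda - \tfrac{1}{\lambda^{k-1}} - S(\mathbf{p})\Bigr)^{m(\mathbf{p})K_2},
\end{equation*}
where $S(\mathbf{p}) = \sum_{e\in E_u(\widehat{T})}\mathbf{p}_{e\setminus\{u\}}$. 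Any eigenvalue $\lambda$ of $T$ therefore lies in one of three classes: (i) a root of $\phi_{T-u}$, (ii) a value with $\lambda = S(\mathbf{p})$ for some $\mathbf{p}\in\mathcal{V}(\widehat{T})$, or (iii) a value satisfying $\lambda - \lambda^{-(k-1)} = S(\mathbf{p})$ for some $\mathbf{p}\in\mathcal{V}(\widehat{T})$. Class (i) is handled by induction applied to the forest $T-u$; Lemma \ref{lem2.3} identifies class (ii) values with eigenvalues of $\widehat{T}$, handled by induction on $\widehat{T}$.

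The main obstacle is class (iii). My plan is to establish, using Lemma \ref{lemmatching}(2) expanded at a pendant vertex $v_1$, the identity
\begin{equation*}
\varphi_T(\lambda) = \lambda^{k-1}\varphi_{\widehat{T}}(\lambda) - \varphi_{\widehat{T}-u}(\lambda),
\end{equation*}
and then, by combining this with the factorization of $\phi_{\widehat{T}}$ from Lemma \ref{lem2.3} and the inductive hypothesis for $\widehat{T}$, to show that each class (iii) solution of $\phi_T$ is a root of $\varphi_{\widehat{T}'}$ for some induced sub-tree $\widehat{T}'\subseteq T$ (typically $\widehat{T}' = T$ itself). The hard part will be precisely matching the class (iii) eigenvalues with roots of matching polynomials: reconciling the $\lambda^{-(k-1)}$ shift appearing in Lemma \ref{lem2.6} with the $\lambda^{k-1}$ factor appearing in the matching polynomial recursion, and checking that every induced sub-tree of $T$, not merely those obtained by iteratively deleting pendant edges, is accounted for in the bookkeeping.
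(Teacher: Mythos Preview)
The paper does not prove Lemma~\ref{lem2.8}; it is quoted as Theorem~2 of Clark and Cooper and used as a black box, so there is no in-paper argument to compare against.

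Regarding your proposal on its own merits: the $(\Leftarrow)$ direction via zero-extension is correct and is essentially the standard argument. For $(\Rightarrow)$, your three-way split coming out of Lemma~\ref{lem2.6} is sound, and classes~(i) and~(ii) are indeed handled by induction together with Lemma~\ref{lem2.3}. The gap is exactly where you say it is: class~(iii). Your identity $\varphi_T(\lambda)=\lambda^{k-1}\varphi_{\widehat{T}}(\lambda)-\varphi_{\widehat{T}-u}(\lambda)$ is correct, but by itself it only relates $\varphi_T$ to $\varphi_{\widehat{T}}$ and $\varphi_{\widehat{T}-u}$, whereas what you need is to identify, for each individual $\mathbf{p}\in\mathcal{V}(\widehat{T})$, the factor $\lambda-\lambda^{-(k-1)}-S(\mathbf{p})$ with a quotient of matching polynomials of some induced sub-tree. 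That identification is not supplied by the inductive hypothesis on $\widehat{T}$ alone, since the hypothesis speaks only of \emph{roots} of $\phi_{\widehat{T}}$, not of the value of $S(\mathbf{p})$ at arbitrary points of the variety.

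It is worth noting that the paper's own Corollary~\ref{cor3.2}, proved later and without invoking Lemma~\ref{lem2.8}, furnishes precisely this missing piece: for every $\mathbf{p}\in\mathcal{V}(\widehat{T})$ one has $\lambda-S(\mathbf{p})=\varphi_{\widehat{T}_{\mathbf{p}}}(\lambda)/\varphi_{\widehat{T}_{\mathbf{p}}-u}(\lambda)$, so the class~(iii) equation becomes $\lambda^{k-1}\varphi_{\widehat{T}_{\mathbf{p}}}(\lambda)-\varphi_{\widehat{T}_{\mathbf{p}}-u}(\lambda)=0$, which by your identity is exactly $\varphi_{\widetilde{T}_{\mathbf{p}}}(\lambda)=0$ with $\widetilde{T}_{\mathbf{p}}$ the induced sub-tree of $T$ obtained by adjoining $e^*$ to $\widehat{T}_{\mathbf{p}}$ at $u$. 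So your strategy can be completed, but only by importing a result the paper establishes after, and independently of, the lemma you are trying to prove.
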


\begin{lem}[Proposition 8]\cite{Su2018matching}\label{lem2.9}
Let $k \geq 3$, and let $T$ be a $k$-uniform hypertree. Then the spectral radius $\rho(T)$ of $T$ is a simple root of the matching polynomial $\varphi_{T}(\lambda)$.
\end{lem}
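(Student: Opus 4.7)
The plan is to first show that $\rho(T)$ is a root of $\varphi_T(\lambda)$, and then to verify simplicity by induction on $|E(T)|$. For the first step, $\rho(T)$ is an eigenvalue of $T$ by Perron--Frobenius for non-negative irreducible tensors \cite{chang2008perron}, so Lemma \ref{lem2.8} gives that $\rho(T)$ is a root of $\varphi_{\widehat{T}}(\lambda)$ for some induced sub-tree $\widehat{T}$ of $T$. Conversely, every root of $\varphi_{\widehat{T}}$ is an eigenvalue of $\widehat{T}$ by \cite{zhang2017tree}, hence has modulus at most $\rho(\widehat{T})$. Strict monotonicity of the spectral radius on proper induced sub-hypergraphs then forces $\widehat{T}=T$, and so $\varphi_T(\rho(T))=0$.

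For the induction, the base case $|E(T)|=1$ is immediate since $\varphi_T(\lambda)=\lambda^k-1$ has $\rho(T)=1$ as a simple root. In the inductive step, pick a pendant edge $e$ of $T$ with attachment vertex $u$ and pendant vertices $v_1,\ldots,v_{k-1}$, and let $T^*$ be the sub-hypertree of $T$ obtained by deleting $e$ together with $v_1,\ldots,v_{k-1}$. Applying the edge-recurrence of Lemma \ref{lemmatching} at $v_1$ (so that $E_{v_1}=\{e\}$) and the disjoint-union identity of Lemma \ref{lemmatching} to split off the $k-2$ isolated vertices $v_2,\ldots,v_{k-1}$ appearing in $T-v_1$ yields the recurrence
\begin{equation*}
  \varphi_T(\lambda) = \lambda^{k-1}\varphi_{T^*}(\lambda) - \varphi_{T^*-u}(\lambda).
\end{equation*}

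Writing $r=\rho(T)$ and differentiating this recurrence gives
\begin{equation*}
  \varphi_T'(r) = (k-1)r^{k-2}\varphi_{T^*}(r) + r^{k-1}\varphi_{T^*}'(r) - \varphi_{T^*-u}'(r).
\end{equation*}
By strict monotonicity of the spectral radius, $r>\rho(T^*)\geq\rho(T^*-u)$, and by \cite{zhang2017tree} every root $\mu$ of $\varphi_{T^*}$ or $\varphi_{T^*-u}$ satisfies $|\mu|<r$. The logarithmic derivatives at $r$ therefore equal positive sums $\sum_\mu (r-\mu)^{-1}$, in which each real root contributes a positive number and each complex conjugate pair contributes $2(r-\mathrm{Re}\,\mu)/|r-\mu|^2>0$. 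Substituting the identity $r^{k-1}\varphi_{T^*}(r)=\varphi_{T^*-u}(r)$ coming from $\varphi_T(r)=0$ into the derivative formula and using these positivity estimates would then yield $\varphi_T'(r)>0$, so $r$ is a simple root of $\varphi_T$.

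The main obstacle is executing the final sign step rigorously: the inductive hypothesis needs to be strengthened to a Perron-like positivity statement --- $\varphi_{T^*}$ positive and log-derivative positive on $(\rho(T^*),\infty)$ --- an analogue of Heilmann--Lieb bookkeeping adapted to hypertrees. Alternatively, one might bypass the sign analysis by combining the characteristic-polynomial reduction formulas in Lemmas \ref{lem2.3} and \ref{lem2.6} with the divisibility $\varphi_T\mid\phi_T$ from \cite{zhang2017tree} to transfer Perron--Frobenius simplicity from the tensor side directly to simplicity as a root of $\varphi_T$.
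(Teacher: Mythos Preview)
The paper does not prove this lemma at all; it simply quotes Proposition~8 of \cite{Su2018matching} and uses it as a black box in the proof of the main theorem. So there is nothing in the paper to compare your argument against, and your task is really to give an independent proof.

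Your first half --- showing $\varphi_T(\rho(T))=0$ via Lemma~\ref{lem2.8} together with strict monotonicity of the spectral radius --- is correct. The gap is exactly where you flag it: the inequality
\[
\frac{k-1}{r}+\frac{\varphi_{T^*}'(r)}{\varphi_{T^*}(r)}>\frac{\varphi_{T^*-u}'(r)}{\varphi_{T^*-u}(r)}
\]
does not follow from the information you have assembled. Your first proposed strengthening (positivity of $\varphi_{T^*}$ and of its logarithmic derivative on $(\rho(T^*),\infty)$) is in fact automatic from the root--modulus bound you already used, so it adds nothing and the inequality above remains unproved; one needs control of the \emph{difference} of the two log-derivatives, e.g.\ by carrying an inductive monotonicity statement for the ratios $\varphi_{S}(\lambda)/\varphi_{S-w}(\lambda)$ rather than for the polynomials separately. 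Your second proposed fix is worse: Perron--Frobenius for nonnegative tensors gives only geometric simplicity (a unique positive eigenvector up to scaling), not algebraic multiplicity $1$ in $\phi_T$. Indeed the entire point of the present paper is that the algebraic multiplicity of $\rho(T)$ in $\phi_T$ equals $k^{m(k-2)}>1$ when $k\ge 3$ and $m\ge 1$, and that computation itself \emph{uses} Lemma~\ref{lem2.9}. So invoking ``Perron--Frobenius simplicity from the tensor side'' is both unavailable and circular here.
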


\section{Main results}\label{sec3}
The algebraic multiplicity of the spectral radius of a $k$-uniform hypertree $T$ is determined in this section.


For a hypergraph $H=(V,E)$ with the vertex $u$,
recall that $F_v=\lambda x^{k-1}_v-\sum_{e \in E_v}\mathbf{x}_{e \setminus \{v\}}$ and $f_v=F_v|_{x_u=1}$ for all $v \in V$.
Let $\mathcal{V}$ be the affine variety defined by the polynomials $f_v$ for all $ v \in V \setminus \{u\}$.

\begin{lem}\label{lem3.1}
Let $T=(V,E)$ be a uniform hypertree with the vertex $u$, and let $\mathbf{p}\in \mathcal{V}$ have all coordinates nonzero. Then
\begin{equation*}
\mathbf{p}_{e\setminus \{u\} } = \frac{\varphi_{T-e}(\lambda)}{\varphi_{T-u}(\lambda)}
\end{equation*}for each $e \in E_u$.
Moreover, we have
$$\lambda-\sum_{e \in E_u}\mathbf{p}_{e \setminus \{u\}}=\frac{\varphi_{T}(\lambda)}{\varphi_{T-u}(\lambda)}.$$
\end{lem}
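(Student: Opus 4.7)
The plan is to prove both assertions simultaneously by strong induction on $|E(T)|$, deducing the second from the first via part (2) of Lemma \ref{lemmatching}. The base case $|E(T)|\le 1$ is routine: a single vertex is vacuous, while a single edge reduces to the computation $\prod_i p_{v_i}=\lambda^{-(k-1)}$ obtained by multiplying the $k-1$ eigenvalue equations $\lambda p_{v_i}^{k-1}=\prod_{j\ne i} p_{v_j}$ and using that all coordinates are nonzero.

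If $|E_u|\ge 2$, then $u$ is a cut vertex of $T$ and the decomposition noted before Lemma \ref{lembao} gives $\mathcal{V}(T)=\bigoplus_i \mathcal{V}(\widehat{T}_i)$, where each branch $\widehat{T}_i$ contains exactly one edge $e_i\in E_u$ and strictly fewer edges than $T$. Applying the induction hypothesis on each branch and combining with part (1) of Lemma \ref{lemmatching} applied to the decompositions $T-u=\bigsqcup_j(\widehat{T}_j-u)$ and $T-e_i=(\widehat{T}_i-e_i)\sqcup\bigsqcup_{j\ne i}(\widehat{T}_j-u)$ transfers $\mathbf{p}_{e_i\setminus\{u\}}=\varphi_{\widehat{T}_i-e_i}/\varphi_{\widehat{T}_i-u}$ into the desired $\mathbf{p}_{e_i\setminus\{u\}}=\varphi_{T-e_i}/\varphi_{T-u}$.

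The substantive case is $|E_u|=1$, say $E_u=\{e\}$ with $e=\{u,v_1,\ldots,v_{k-1}\}$. Let $T_{v_i}$ denote the connected component of $T-u$ containing $v_i$; because $T$ is a hypertree, $T-u=\bigsqcup_i T_{v_i}$ and $T-e=\bigsqcup_i (T_{v_i}-v_i)$. For each $i$, I rescale $q_w:=p_w/p_{v_i}$ for $w\in V(T_{v_i})$; since no vertex of $T_{v_i}\setminus\{v_i\}$ is incident to $e$ in $T$, its eigenvalue equation in $T$ coincides with that in $T_{v_i}$, and by homogeneity of degree $k-1$ one has $\mathbf{q}\in\mathcal{V}(T_{v_i})$ with all nonzero coordinates. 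Invoking the second-part induction hypothesis on $T_{v_i}$ at $v_i$ yields $\sum_{e'\in E_{v_i}(T_{v_i})}\mathbf{q}_{e'\setminus\{v_i\}}=\lambda-\varphi_{T_{v_i}}/\varphi_{T_{v_i}-v_i}$; multiplying through by $p_{v_i}^{k-1}$ to undo the scaling and substituting into the eigenvalue equation $\lambda p_{v_i}^{k-1}=\prod_{j\ne i}p_{v_j}+\sum_{e'\in E_{v_i}(T_{v_i})}\mathbf{p}_{e'\setminus\{v_i\}}$ at $v_i$ in $T$ collapses to the per-vertex identity
\[
p_{v_i}^{k-1}=\Bigl(\prod_{j\ne i} p_{v_j}\Bigr)\frac{\varphi_{T_{v_i}-v_i}(\lambda)}{\varphi_{T_{v_i}}(\lambda)}.
\]
Taking the product over $i\in[k-1]$ and cancelling $(\prod_i p_{v_i})^{k-2}$ from both sides gives the first assertion for $T$. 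The second then follows at once from the first and part (2) of Lemma \ref{lemmatching}, since $\lambda-\sum_{e\in E_u}\mathbf{p}_{e\setminus\{u\}}=\bigl(\lambda\varphi_{T-u}-\sum_{e\in E_u}\varphi_{T-e}\bigr)/\varphi_{T-u}=\varphi_T/\varphi_{T-u}$.

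The main obstacle is the rescaling step in the single-branch case: certifying that $\mathbf{q}$ truly lies in $\mathcal{V}(T_{v_i})$ relies on the tree structure ensuring that every interior vertex of $T_{v_i}$ has identical neighbourhoods in $T$ and in $T_{v_i}$, and some careful bookkeeping is needed to convert the inductive identity, which a priori only describes a sum of edge-weights at $v_i$, into the clean per-vertex identity whose product over $i$ telescopes to the claimed ratio.
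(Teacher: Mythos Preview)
Your proposal is correct and follows essentially the same approach as the paper: induction on $|E|$, splitting into the cut-vertex case (handled via the branch decomposition $\mathcal{V}(T)=\bigoplus_i\mathcal{V}(\widehat{T}_i)$ and Lemma~\ref{lemmatching}(\ref{lem2.7(1)})) and the degree-one case (handled by rescaling to land in $\mathcal{V}(T_{v_i})$, applying the inductive hypothesis at each $v_i$, and taking the product over $i$ to telescope). The only cosmetic difference is that in the degree-one case you invoke the second assertion of the lemma inductively to get $\sum_{e'}\mathbf{q}_{e'\setminus\{v_i\}}$ directly, whereas the paper invokes the first assertion for each $e'$ and then sums using Lemma~\ref{lemmatching}(\ref{lem2.7(2)}); the resulting per-vertex identity and the product step are identical.
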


\begin{proof}
We prove the result by the induction on $|E|$.

When $|E|=1$. It is shown that $ \mathbf{p}_{e\setminus \{u\} }=\frac{1}{\lambda^{k-1}}$ in the \cite[Equation (5)]{chen2021reduction}, which implies that $$\mathbf{p}_{e\setminus \{u\} }=\frac{\varphi_{T-e}(\lambda)}{\varphi_{T-u}(\lambda)},$$ so the assertion holds.

Assuming the statement holds for any $|E|\leq m$, we consider the case $|E|=m+1$.

When $u$ is a cut vertex of $T$.
Note that there are $d_u$ ($> 1$) branches of the hypertree $T$ associated with $u$, and each $e \in E_u$ belongs to a distinct branch.
Suppose that $\widehat{T}_i $ are branches of $T$ with $e_i \in E_u$ for every $i \in [d_u]$.
For $\mathbf{q}^{(i)} \in \mathcal{V}(\widehat{T}_i)$ having all coordinates nonzero, we have
\begin{align}\label{eqx}
\mathbf{q}_{e_i \setminus \{u\}}^{(i)}&=\frac{\varphi_{\widehat{T}_i-{e_i}}(\lambda)}{\varphi_{\widehat{T}_i-u}(\lambda)}\notag \\
&=\frac{\varphi_{\widehat{T}_i-{e_i}}(\lambda)\prod_{ j \in [d_u] \atop i\neq j}\varphi_{\widehat{T}_j-u}(\lambda)}{\prod_{ j \in [d_u] }\varphi_{\widehat{T}_j-u}(\lambda)}
\end{align}
by the induction hypothesis,
since $|E(\widehat{T}_i)| \leq m$.
Note that $T-e_i$ is the disjoint union of $\widehat{T}_i-{e_i}$ and ${\widehat{T}_j-u}$ for all $i\neq j$, and $T-u$ is the disjoint union of ${\widehat{T}_j-u}$ for all $j \in [d_u]$.
It implies that
\begin{align*}
{\varphi_{T-e_i}(\lambda)}&={\varphi_{\widehat{T}_i-{e_i}}(\lambda)\prod_{ j \in [d_u] \atop i\neq j}\varphi_{\widehat{T}_j-u}(\lambda)}
\end{align*}
and
\begin{align*}
{\varphi_{T-u}(\lambda)}&={\prod_{ j \in [d_u] }\varphi_{\widehat{T}_j-u}(\lambda)}
\end{align*}
from Lemma \ref{lemmatching} (\ref{lem2.7(1)}).
For $\mathbf{p} \in \mathcal{V}$, we get $$\mathbf{p}_{e_i\setminus \{u\} }=\mathbf{q}_{e_i \setminus \{u\}}^{(i)}=\frac{\varphi_{T-e_i}(\lambda)}{\varphi_{T-u}(\lambda)}$$ from \eqref{eqx} and Lemma \ref{lembao}.

When $u$ is not a cut vertex of $T$.
Note that the degree of $u$ is one and we set the hyperedge containing $u$ as $e_0=\{v_1,v_2,\ldots,v_{k-1},v_{k}=u\}$.
Let $T\setminus e_0$ denote the hypergraph with $V(T\setminus e_0)=V$ and $E(T\setminus e_0)=E \setminus \{e_0\}$.
It is natural that $T\setminus e_0$ has $k$ connected components and we use $\widetilde{T}_t=(\widetilde{V}_t,\widetilde{E}_t)$ to denote the connected component containing $v_t$  for each $t \in [k]$.

For $v \in V$, recall that $F_v=F_v(x_i : i \in V)=\lambda x^{k-1}_v-\sum_{e \in E_v(T)}\mathbf{x}_{e \setminus \{v\}}$ and $f_v=F_v|_{x_u=1}$.
For all $t\in [k-1]$ and any $v \in \widetilde{V}_t \setminus \{v_t\}$, note that $f_v=f_v(x_i : i \in \widetilde{V}_t )$ is homogeneous.
Let $\mathbf{p}=(\mathbf{p}_i) \in \mathcal{V}$ have all coordinates nonzero, then we have
\begin{equation}\label{eq3.1}
f_v(\mathbf{p})=f_v(\mathbf{p}_i: i \in \widetilde{V}_t)=f_v({\mathbf{p}_i}/{\mathbf{p}_{v_t}}: i \in \widetilde{V}_t)=0.
\end{equation}

Let $\widetilde{F}_v=\widetilde{F}_v(x_i:i\in \widetilde{V}_t)=\lambda x^{k-1}_v-\sum_{e \in E_v(\widetilde{T}_t)}\mathbf{x}_{e \setminus \{v\}}$ and $\widetilde{f}_v=\widetilde{F}_v|_{x_{v_t}=1}$.
For $v \in \widetilde{V}_t \setminus \{v_t\}$, note that $\widetilde{F}_v=f_v$.
Set $\mathbf{q}_i={\mathbf{p}_i}/{\mathbf{p}_{v_t}}$ for $v\in \widetilde{V}_t$ and note that $\mathbf{q}_{v_t}=1$.
By \eqref{eq3.1}, we have
$$\widetilde{f}_v(\mathbf{q}_i: i \in \widetilde{V}_t)=\widetilde{F}_v(\mathbf{q}_i: i \in \widetilde{V}_t)|_{\mathbf{q}_{v_t}=1}=f_v(\mathbf{q}_i: i \in \widetilde{V}_t)=0$$
for all $ v \in \widetilde{V}_t \setminus \{v_t\}$.
Let the vector $\mathbf{q}=(\mathbf{q}_i)$ for $i \in \widetilde{V}_t \setminus \{v_t\}$,
indeed $\mathbf{q}$ is a point in the variety $\mathcal{V}(\widetilde{T}_t)$ defined by the polynomials $\widetilde{f}_v$ for all $ v \in \widetilde{V}_t \setminus \{v_t\}$.
From the assumption, we have
\begin{equation}\label{eq3.2}
\mathbf{q}_{e \setminus \{v_t\}}=\frac{\mathbf{p}_{e \setminus \{v_t\}}}{\mathbf{p}^{k-1}_{v_t}}=\frac{\varphi_{\widetilde{T}_t-{e}}(\lambda)}{\varphi_{\widetilde{T}_t-v_t}(\lambda)}
\end{equation}
for each $e \in E_{v_t}(\widetilde{T}_t)$.
For $\mathbf{p} \in \mathcal{V}$ having non-zero coordinates, we have
$$\mathbf{p}_{e \setminus \{v_t\}}=\frac{\varphi_{\widetilde{T}_t-{e}}(\lambda)}{\varphi_{\widetilde{T}_t-v_t}(\lambda)}\mathbf{p}^{k-1}_{v_t}$$
by \eqref{eq3.2}.
Note that
\begin{align*}
f_{v_t}(\mathbf{p})=\lambda \mathbf{p}_{v_t}^{k-1}-\sum_{e \in E_{v_t}(\widetilde{T}_t)}\mathbf{p}_{e \setminus \{v_t\}}-\mathbf{p}_{e_0 \setminus \{v_t,u\}}=0
\end{align*}
for all $t \in [k-1]$.
Then we have
\begin{align*}
\mathbf{p}_{e_0 \setminus \{v_t,u\}}&=\left(\lambda-\sum_{e \in E_{v_t}(\widetilde{T}_t)}\frac{\varphi_{\widetilde{T}_t-{e}}(\lambda)}{\varphi_{\widetilde{T}_t-v_t}(\lambda)}\right)\mathbf{p}^{k-1}_{v_t}\\
&=\frac{{\varphi_{\widetilde{T}_t}(\lambda)}}{{\varphi_{\widetilde{T}_t-v_t}(\lambda)}}\mathbf{p}^{k-1}_{v_t}
\end{align*}
from Lemma \ref{lemmatching} (\ref{lem2.7(2)}).
Combining these equations for all $t \in [k-1]$, we get
$$\prod_{t=1}^{k-1}\mathbf{p}_{e_0 \setminus \{v_t,u\}}=\prod_{t=1}^{k-1}\frac{{\varphi_{\widetilde{T}_t}(\lambda)}}{{\varphi_{\widetilde{T}_t-v_t}(\lambda)}}\mathbf{p}^{k-1}_{v_t},$$
which implies that
$$\mathbf{p}_{e_0 \setminus \{u\}}=\prod_{t=1}^{k-1}\frac{{\varphi_{\widetilde{T}_t-v_t}(\lambda)}}{{\varphi_{\widetilde{T}_t}(\lambda)}}.$$
Note that $T-e_0$ is the disjoint union of ${\widetilde{T}_t-v_t}$ for all $t \in [k-1]$, and $T-u$ is  the disjoint union of ${\widetilde{T}_t}$ for all $t \in [k-1]$.
Then we have $\mathbf{p}_{e_0 \setminus \{u\}}=\frac{\varphi_{T-e_0}(\lambda)}{\varphi_{T-u}(\lambda)}$ from Lemma \ref{lemmatching} (\ref{lem2.7(1)}).
By the induction, we have
\begin{equation*}
\mathbf{p}_{e\setminus \{u\} } = \frac{\varphi_{T-e}(\lambda)}{\varphi_{T-u}(\lambda)}
\end{equation*} for $e \in E_u$.
Hence, we get $$\lambda-\sum_{e \in E_u(T)}\mathbf{p}_{e \setminus \{u\}}=\frac{\varphi_{T}(\lambda)}{\varphi_{T-u}(\lambda)}$$
from Lemma \ref{lemmatching} (\ref{lem2.7(2)}).
\end{proof}

The support of a vector $\mathbf{p} \in \mathcal{V}$, denoted by $\mathrm{supp}(\mathbf{p})$, is the set of all indices for non-zero coordinates of $\mathbf{p}$.
Let the $|\mathrm{supp}(\mathbf{p})|$-dimensional vector $\mathbf{p}^*$ denote the non-zero projection of $\mathbf{p}$, i.e., $\mathbf{p}^*$ is the vector constructed from all the non-zero coordinates of $\mathbf{p}$.
We use  $H[U]$ to denote the induced sub-hypergraph of a hypergraph $H=(V,E)$ on $U \subseteq V$.
It is seen that $\mathbf{p}^* \in \mathcal{V}(H[\mathrm{supp}(\mathbf{p}) \cup \{u\}])$ only has nonzero coordinates.
Applying Lemma \ref{lem3.1} to $\mathbf{p}^* $,  we can directly extend Lemma \ref{lem3.1} to the following broader form.

\begin{cor}\label{cor3.2}
Let $T=(V,E)$ be a hypertree with the vertex $u$.
For $\mathbf{p}\in \mathcal{V}$, let $T_{\mathbf{p}}=T[\mathrm{supp}(\mathbf{p}) \cup \{u\}]$.
Then we have
\begin{equation*}
\mathbf{p}_{e\setminus \{u\} } = \frac{\varphi_{T_{\mathbf{p}}-e}(\lambda)}{\varphi_{T_{\mathbf{p}}-u}(\lambda)}
\end{equation*}for each $e \in E_u(T_{\mathbf{p}})$.
Moreover, we have
$$\lambda-\sum_{e \in E_u}\mathbf{p}_{e \setminus \{u\}}=\frac{\varphi_{T_{\mathbf{p}}}(\lambda)}{\varphi_{T_{\mathbf{p}}-u}(\lambda)}.$$
\end{cor}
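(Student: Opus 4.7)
The plan is to reduce the corollary directly to Lemma \ref{lem3.1} by passing to the non-zero projection $\mathbf{p}^*$. Set $S=\mathrm{supp}(\mathbf{p})\cup\{u\}$ so that $T_{\mathbf{p}}=T[S]$; the vector $\mathbf{p}^*$ has all coordinates non-zero by construction, so only two points need real work: (i) check that $\mathbf{p}^*\in\mathcal{V}(T_{\mathbf{p}})$, and (ii) apply Lemma \ref{lem3.1} on $T_{\mathbf{p}}$ and translate the conclusion back to $\mathbf{p}$.

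For (i), fix any $v\in\mathrm{supp}(\mathbf{p})$ and look at $f_v(\mathbf{p})=0$, namely
$$\lambda\mathbf{p}_v^{k-1}=\sum_{e\in E_v(T)}\mathbf{p}_{e\setminus\{v\}}.$$
A summand $\mathbf{p}_{e\setminus\{v\}}$ is non-zero precisely when every vertex of $e\setminus\{v\}$ lies in $S$, i.e., when $e$ is an edge of $T_{\mathbf{p}}$. The displayed identity is therefore exactly the defining equation of $\mathcal{V}(T_{\mathbf{p}})$ at $v$, which gives $\mathbf{p}^*\in\mathcal{V}(T_{\mathbf{p}})$.

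For (ii), the subtlety is that $T_{\mathbf{p}}$ is in general a $k$-uniform forest rather than a hypertree, so Lemma \ref{lem3.1} cannot be applied to it verbatim. I would let $T^{u}$ denote the connected component of $T_{\mathbf{p}}$ containing $u$; the defining polynomials of $\mathcal{V}(T_{\mathbf{p}})$ decouple across components, so the restriction of $\mathbf{p}^*$ to $V(T^{u})\setminus\{u\}$ lies in $\mathcal{V}(T^{u})$ with all coordinates non-zero. Applying Lemma \ref{lem3.1} to $T^{u}$ yields, for each $e\in E_u(T^{u})=E_u(T_{\mathbf{p}})$,
$$\mathbf{p}^*_{e\setminus\{u\}}=\frac{\varphi_{T^{u}-e}(\lambda)}{\varphi_{T^{u}-u}(\lambda)}.$$
Multiplicativity of the matching polynomial over disjoint unions (Lemma \ref{lemmatching}(\ref{lem2.7(1)})) then permits multiplying numerator and denominator by $\varphi$ of the remaining components of $T_{\mathbf{p}}$, replacing $T^{u}$ by $T_{\mathbf{p}}$ on the right-hand side; since $\mathbf{p}_{e\setminus\{u\}}=\mathbf{p}^*_{e\setminus\{u\}}$ whenever $e\in E(T_{\mathbf{p}})$, this is the first identity of the corollary. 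The second identity follows by summing, using the summed form of Lemma \ref{lem3.1} on $T^{u}$ together with the observation that $\mathbf{p}_{e\setminus\{u\}}=0$ for every $e\in E_u(T)\setminus E_u(T_{\mathbf{p}})$ (some vertex of $e$ lies outside $S$).

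I expect step (i) to be essentially immediate; the main obstacle is the bookkeeping in step (ii), namely aligning the index sets between $\mathbf{p}$ and $\mathbf{p}^*$ and verifying that the components of $T_{\mathbf{p}}$ disjoint from $u$ contribute equal factors to the numerator and denominator of the matching-polynomial ratio, so that they cancel cleanly. Once these cancellations are made precise, the corollary reduces to an immediate consequence of Lemma \ref{lem3.1}.
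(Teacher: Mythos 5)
Your proposal is correct and follows essentially the same route as the paper: pass to the non-zero projection $\mathbf{p}^*$, observe it lies in $\mathcal{V}(T_{\mathbf{p}})$ with all coordinates non-zero, and invoke Lemma \ref{lem3.1}. Your extra step---restricting to the component of $T_{\mathbf{p}}$ containing $u$ and cancelling the remaining components via Lemma \ref{lemmatching}(\ref{lem2.7(1)})---is a sound refinement of a point the paper's one-line justification leaves implicit (Lemma \ref{lem3.1} is stated for a connected hypertree, while $T_{\mathbf{p}}$ is a priori only a forest), but it does not change the underlying argument.
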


We are now ready to  determine the algebraic multiplicity of the spectral radius of a uniform hypertree.
\begin{thm}
The algebraic multiplicity of the spectral radius of a $k$-uniform hypertree with $m$ edges is $k^{m(k-2)}$.
\end{thm}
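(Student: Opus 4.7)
The plan is to identify $\mathrm{mult}(\rho(T),\phi_T)$ with a combinatorial count of fully-supported points in the variety $\mathcal V$ attached to a chosen vertex $u$, and then to evaluate that count by induction on $m$ via a decoupling across a pendant edge. Set
$$N(T,u) := \sum_{\substack{\mathbf{p}\in\mathcal V\\ \mathrm{supp}(\mathbf{p})=V\setminus\{u\}}} m(\mathbf{p}),$$
the sum of local intersection multiplicities over points of $\mathcal V$ with every coordinate nonzero. I will show both $\mathrm{mult}(\rho(T),\phi_T)=N(T,u)$ for every $u$ and $N(T,u)=k^{m(k-2)}$.

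For the first equality, I combine Lemma \ref{lem2.3} with Corollary \ref{cor3.2}: each linear factor in the product of Lemma \ref{lem2.3} rewrites as $\varphi_{T_\mathbf{p}}(\lambda)/\varphi_{T_\mathbf{p}-u}(\lambda)$, and grouping points by their induced sub-hyperforest $T_\mathbf{p}$ produces a rational-function identity. Strict Perron--Frobenius monotonicity of the nonnegative adjacency tensor yields $\rho(F)<\rho(T)$ for every proper induced sub-hyperforest $F\subsetneq T$; applied component-wise together with Lemma \ref{lem2.8}, this shows $\rho(T)$ is not a root of $\varphi_F$ or $\varphi_{F-u}$, and in particular not of $\phi_{T-u}$. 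Lemma \ref{lem2.9} then ensures $\rho(T)$ is a simple root of $\varphi_T$ and not of $\varphi_{T-u}$. Hence only the group $T_\mathbf{p}=T$ contributes to the order of vanishing at $\rho(T)$, each as a simple zero, giving $\mathrm{mult}(\rho(T),\phi_T)=N(T,u)$.

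For the second equality, proceed by induction on $m$. For $m\geq 2$, choose a pendant edge $e_0=\{v_1,\ldots,v_{k-1},u\}$ with $u$ the non-pendant endpoint, and let $\widehat T$ denote $T$ with $e_0$ and its pendant vertices removed. Because $v_1,\ldots,v_{k-1}$ have degree one in $T$, the defining polynomials $\{f_v : v\neq u\}$ of $\mathcal V(T)$ split into those of $\mathcal V(\widehat T)$ in the variables $\{x_v : v\in V(\widehat T)\setminus\{u\}\}$ and an independent pendant system $\mathcal W$ given by $\lambda x_{v_j}^{k-1}=\prod_{i\neq j}x_{v_i}$ for $j\in[k-1]$ in the remaining variables. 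Hence $\mathcal V(T)=\mathcal V(\widehat T)\times\mathcal W$ with local multiplicities multiplying, and fully-supported points pair with fully-supported points, yielding $N(T,u)=N(\widehat T,u)\cdot N(\mathcal W)$. A direct analysis of $\mathcal W$ shows its only zero-coordinate solution is the origin and its nonzero solutions form $k^{k-2}$ simple points parametrised by $(k-2)$-tuples of $k$th roots of unity (consistent with the exponent $K_2=k^{k-2}$ of Lemma \ref{lem2.6}), so $N(\mathcal W)=k^{k-2}$. The base case $m=1$ has $\mathcal V(T)=\mathcal W$, giving $N(T,u)=k^{k-2}$; the inductive step yields $N(T,u)=k^{(m-1)(k-2)}\cdot k^{k-2}=k^{m(k-2)}$.

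The step I expect to be most delicate is the first equality: substituting Corollary \ref{cor3.2} into Lemma \ref{lem2.3} produces a rational-function identity rather than a polynomial one, and correctly extracting the order of vanishing at $\rho(T)$ requires ensuring that every $\varphi_{T_\mathbf{p}}$ and $\varphi_{T_\mathbf{p}-u}$ with $T_\mathbf{p}\neq T$ is invertible at $\rho(T)$. Strict Perron--Frobenius monotonicity of $\rho$ on proper induced sub-hyperforests of $T$, combined with Lemma \ref{lem2.8}, is the essential tool ruling out any other contribution.
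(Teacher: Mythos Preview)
Your proposal is correct and follows essentially the same approach as the paper. Both arguments first combine Lemma~\ref{lem2.3} with Corollary~\ref{cor3.2}, use strict Perron--Frobenius monotonicity together with Lemmas~\ref{lem2.8} and~\ref{lem2.9} to isolate the fully-supported points, and then obtain the recursion ``multiply by $k^{k-2}$ per pendant edge''; the only cosmetic difference is that the paper cites Lemma~\ref{lem2.6} as a black box (adding a pendant edge to $T$) whereas you re-derive the underlying product decomposition $\mathcal V(T)\cong\mathcal V(\widehat T)\times\mathcal W$ directly (removing a pendant edge from $T$), which is exactly the content behind Lemma~\ref{lem2.6} and a special case of the cut-vertex decomposition in Lemma~\ref{lembao}.
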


\begin{proof}
From the Poisson Formula for hypergraphs and Corollary \ref{cor3.2}, we have
\begin{align}\label{eq3.3}
 \phi_T(\lambda)=\phi_{T-u}(\lambda)^{k-1}\prod_{\mathbf{p} \in \mathcal{V}}\left(\frac{\varphi_{T_{\mathbf{p}}}(\lambda)}{\varphi_{T_{\mathbf{p}}-u}(\lambda)}\right)^{m(\mathbf{p})}
 \end{align}
for a hypertree $T=(V,E)$ and any $u \in V$.

Lemma \ref{lem2.8} shows that the roots of the matching polynomials of sub-hypertrees of a hypertree $T$ are eigenvalues of $T$.
For a connected hypergraph $G$ and its proper sub-graph $G'$, it is known that $\rho(G') < \rho(G)$ \cite[Corollary 3.5]{khan2015radius}.
It tells that $\rho(G)$ is not an eigenvalue of any proper sub-graph of $G$.
It implies that $\rho(T)$ is not a root of the matching polynomials of any proper sub-graph of $T$.
Then we know that the degree of the factor $\lambda-\rho(T)$ in \eqref{eq3.3} is solely determined by $\varphi_{T}(\lambda)$, and its degree in $\varphi_{T}(\lambda)$ is one, as confirmed by Lemma \ref{lem2.9}.
Let $\mathrm{am}(T)$ denote the algebraic multiplicity of the spectral radius $\rho(T)$, and let $\mathcal{V}^*=\{\mathbf{p} \in \mathcal{V}: T_{\mathbf{p}}=T\}$.
By \eqref{eq3.3}, we get
\begin{equation}\label{eq3.4}
 \mathrm{am}(T) =\sum_{\mathbf{p} \in \mathcal{V}^*}m(\mathbf{p}).
\end{equation}

Let $\widetilde{T}$ denote the hypertree obtained from $T$  adding a pendant edge at the vertex $u$.
From Lemma \ref{lem2.6}, we get
\begin{equation}\label{eq3.5}
  \phi_{\widetilde{T}}(\lambda)=\phi_{\widetilde{T}-u}(\lambda)^{k-1}\prod_{\mathbf{p} \in \mathcal{V}}\left(\frac{\varphi_{T_{\mathbf{p}}}(\lambda)}{\varphi_{T_{\mathbf{p}}-u}(\lambda)}\right)^{m(\mathbf{p})(k-1)^{k-1}-k^{k-2}}\prod_{\mathbf{p} \in \mathcal{V}}\left(\frac{\varphi_{\widetilde{T}_{\mathbf{p}}}(\lambda)}{\varphi_{\widetilde{T}_{\mathbf{p}}-u}(\lambda)}\right)^{m(\mathbf{p})k^{k-2}},
\end{equation}
where $\widetilde{T}_{\mathbf{p}}$ denotes the hypertree obtained from $T_{\mathbf{p}}$ by adding a pendant edge at the vertex $u$.
Employing a similar technique as used in obtaining \eqref{eq3.4} from \eqref{eq3.3},
we derive  the following equation from \eqref{eq3.5}:
\begin{equation*}
 \mathrm{am}(\widetilde{T}) =k^{k-2}\sum_{\mathbf{p} \in \mathcal{V}^*}m(\mathbf{p})=k^{k-2} \mathrm{am}(T),
\end{equation*}
which implies that the algebraic multiplicity of the spectral radius increases $k^{k-2}$-fold when a pendant edge is added to a hypertree.
By starting  with a hypertree having no edge, i.e., an isolated vertex, we obtain that the algebraic multiplicity of the spectral radius of a $k$-uniform hypertree with $m$ edges is $k^{m(k-2)}$.
\end{proof}

\noindent{\textbf{Remark}:} The $k$-power hypergraph $G^{(k)}$ is the $k$-uniform hypergraph that is obtained by adding $k-2$ new vertices to each edge of a graph $G=(V,E)$ for $k \geq 3$.  It is shown that the algebraic multiplicity of the spectral radius of $G^{(k)}$ is $k^{|E|(k-3)+|V|-1}$ in \cite{chen2023parity}.
When $G$ is a tree, $G^{(k)}$ becomes a $k$-uniform power hypertree. In this case, the result provided in \cite{chen2023parity} is consistent with the main result of this paper.

\section*{References}
\bibliographystyle{plain}
\bibliography{atbib}
\end{spacing}
\end{document}